\newenvironment{claim}[1]{\par\emph{Claim:}\space#1}{}
\newenvironment{claimproof}[1]{\par\emph{Proof of Claim:}\space#1}
\newtheorem{theorem}{Theorem}[section]
\newtheorem{lemma}[theorem]{Lemma}
\newtheorem{corollary}[theorem]{Corollary}
\theoremstyle{definition}
\newtheorem{remark}[theorem]{Remark}
\numberwithin{equation}{section}
\begin{document}

\title[Lov\'asz Local Lemma]{An introduction to the probabilistic method through the Lov\'asz Local Lemma}

\author{Irfan Alam}
\address{Department of Mathematics\\
Louisiana State University\\
Baton Rouge, Louisiana}
\email{ialam1@lsu.edu}

\subjclass[2010]{Primary 05D40, Secondary 60C05}
\date{April 30, 2015}
\begin{abstract}
We illustrate the use of probability theory in existential proofs, focusing on the Lov\'asz Local Lemma. This result gives a lower bound for the probability of avoiding a suitable finite collection of events. We describe some applications of this result in hypergraph packing and Latin transversals.

\end{abstract}

\maketitle
\vspace{-4mm}
\section{Introduction}
\label{introduction}
The probabilistic method uses probability theory to prove existence of a non-random object. Typically, an appropriate probability measure is defined on a suitable space consisting of all objects under consideration. If the probability of an event is shown to be non-zero, then that event must be non-empty (as a set). In particular, if such an event is defined in terms of the objects satisfying certain properties, then this proves the existence of such objects. Though the probabilistic method was initially used primarily in combinatorics, it has since been refined and used in a number of other areas, such as number theory, linear algebra, real analysis and computer science. So many different ways of using probability in these areas have been found that it is now impossible to narrow the probabilistic method down to a few techniques of proof. In this paper, we will focus on one specific technique that exploits restrictions on interdependence of certain events to guarantee the possibility of avoiding all those events. The main result used in this approach is called the Lov\'asz Local Lemma. The simplest form of the result was first introduced by Erd\H{o}s and Lov\'asz to prove results on hypergraph coloring \cite{Erdos/Lovasz}. The interested reader may refer to \cite{Alon/Spencer} for a thorough treatment of other techniques of the probabilistic method. 

We will first prove a general version of the lemma as stated in \cite{Lu/Szekely}. Our proof will be a modification of the proof of a slightly weaker form of the lemma in \cite{Alon/Spencer}. Unlike the latter form, the version of Lovasz Local Lemma that we will consider works even if there is \textbf{no} independence among the events. This version is sometimes called the Lopsided Lovasz Local Lemma. It was first introduced by Erd\H{o}s and Spencer in \cite{Erdos/Spencer} to prove a result on Latin transversals. After giving a proof of the lemma, we will describe its applications in hypergraph packings using the machinery of negative dependency graphs on the space of random injections developed in \cite{Lu/Szekely}. We will finally use this new machinery to give a proof of a slightly stronger form of the result on Latin transversals from \cite{Erdos/Spencer}.

\section{The Lemma}
\label{The Lemma}
We first introduce the basic concepts needed from probability and graph theory. A \textbf{probability space} is a triple containing a set, a sigma algebra on that set, and a measure defined on that sigma algebra that assigns the value 1 to the original set. Throughout this section, the probability space is $(\Omega, \mathcal{F}, P)$. Elements of the sigma algebra $\mathcal{F}$ are called the \textbf{events} of the space. A \textbf{Boolean combination} of finitely many events is an element in the smallest algebra generated by these events. Two events $A$ and $B$ are said to be independent if $P(A \cap B) = P(A)P(B)$. A finite collection of events is called \textbf{mutually independent} if any event in that collection is independent of any Boolean combination of the other events in that collection. If $A$ and $B$ are events such that $P(B) \neq 0$, then the \textbf{conditional probability} of $A$ given $B$ is defined as $P(A|B) := \frac{P(A \cap B)}{P(B)}$. The probability measures that we will consider in later sections will always be defined on the sigma algebras of power sets of finite sets. Unless otherwise specified, these measures will always be the \textbf{uniform probability measure}, that is, they will assign the same non-zero probability to all singleton sets in the space.

A \textbf{graph} $G$ is a pair $(V,E)$, where $V$ is a set and $E$ is a collection of $2$-subsets of $V$, that is, $E$ contains subsets of $V$ that have cardinality $2$. The elements of $V$ are called the \textbf{vertices} of the graph $G$, and the elements of $E$ are called the \textbf{edges} of the graph $G$. A vertex of a graph is said to be \textbf{incident} on an edge, if the edge contains that vertex. For a vertex $v$ of a graph $(V,E)$, let $J_v := \{w \in V: \{v,w\} \in E\}$. The number of edges that contain a vertex is called the \textbf{degree} of that vertex. Hence, the degree of a vertex $v$ is equal to the cardinality of $J_v$. The maximum degree of vertices in a graph is called the \textbf{degree} of the graph. 
 
A graph $G$ on $[n] := \{1,\ldots,n\}$ is called a \textbf{negative dependency graph} for the events $A_1, \ldots , A_n$ if, for any $i \in [n]$ and any $S \subseteq {J_i}^c$ with $P(\cap_{j \in S}{A_j}^c) > 0$, we have
\begin{equation}\label{first cond}
P(A_i | \cap_{j \in S}{A_j}^c) \leq P(A_i).
\end{equation}

Note that this is trivially satisfied if $P(A_i) = 0$, and if $P(A_i) > 0$, then the above condition is equivalent to the condition
\begin{equation}\label{second cond}
P(\cap_{j \in S}{A_j}^c | A_i) \leq P(\cap_{j \in S}{A_j}^c) \text{ for any such } S \subseteq J_i.
\end{equation}

We are now ready to state the Lovasz Local Lemma.
\\
\begin{lemma}[Lov\'asz Local Lemma]~\\
Suppose that $A_1, \ldots, A_n$ are events and that $G$ is a negative dependency graph for these events. Suppose that $x_1, \ldots, x_n \in [0,1)$ are such that
\begin{equation}\label{LLL1}
P(A_i) \leq x_i \cdot \prod_{j \in J_i}(1 - x_j),
\end{equation}
then $P(\cap_{i=1}^{n}{A_i}^c) \geq \prod_{i = 1}^{n} (1-x_i) > 0$.
\end{lemma}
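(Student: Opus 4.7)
The plan is a standard induction argument, but it must be set up carefully to accommodate the one-sided ``negative dependency'' hypothesis rather than full mutual independence. The main auxiliary claim I would prove by induction on $|S|$ is the following: for every $i\in[n]$ and every $S\subseteq[n]\setminus\{i\}$ with $P(\cap_{j\in S}A_j^c)>0$,
\begin{equation*}
P\!\left(A_i \,\Big|\, \bigcap_{j\in S}A_j^c\right)\le x_i.
\end{equation*}
Once this is established, the target inequality follows by the chain rule: writing $P(\cap_{i=1}^n A_i^c) = \prod_{i=1}^n P(A_i^c \mid \cap_{j<i}A_j^c)$ and bounding each factor by $1-x_i$ gives the desired product (with a short argument to justify that none of the conditioning probabilities collapses to $0$ partway through, handled by the strictness $x_i<1$).

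The base case $S=\emptyset$ is immediate from (\ref{LLL1}) since $\prod_{j\in J_i}(1-x_j)\le 1$. For the inductive step I would split $S=S_1\sqcup S_2$, where $S_1:=S\cap J_i$ and $S_2:=S\setminus J_i\subseteq J_i^c$, and then use the identity
\begin{equation*}
P\!\left(A_i \,\Big|\, \bigcap_{j\in S}A_j^c\right) \;=\; \frac{P\!\left(A_i\cap\bigcap_{j\in S_1}A_j^c \,\Big|\, \bigcap_{j\in S_2}A_j^c\right)}{P\!\left(\bigcap_{j\in S_1}A_j^c \,\Big|\, \bigcap_{j\in S_2}A_j^c\right)}.
\end{equation*}
For the numerator I would drop the $S_1$ events (monotonicity of measure) and then apply the negative dependency hypothesis (\ref{first cond}) with $S_2\subseteq J_i^c$, obtaining the bound $P(A_i)\le x_i\prod_{j\in J_i}(1-x_j)$. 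For the denominator I would enumerate $S_1=\{j_1,\dots,j_r\}$, expand by the chain rule as $\prod_k\bigl(1-P(A_{j_k}\mid\cdots)\bigr)$, and apply the inductive hypothesis to each factor (valid because each conditioning set here is a proper subset of $S$), giving a lower bound of $\prod_{j\in S_1}(1-x_j)\ge\prod_{j\in J_i}(1-x_j)$. The $\prod_{j\in J_i}(1-x_j)$ factors then cancel and leave exactly $x_i$.

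The step I expect to be most delicate is the denominator estimate: to apply the inductive hypothesis to each conditional probability $P(A_{j_k}\mid A_{j_1}^c\cap\cdots\cap A_{j_{k-1}}^c\cap\bigcap_{j\in S_2}A_j^c)$ I need to know that the conditioning event has positive probability, and I need to be sure that the resulting bound $(1-x_{j_k})$ is available for each $k$. Both points follow from the fact that at each stage the conditioning set has size strictly less than $|S|$ and the previously obtained conditional bounds $\le x_{j_\ell}<1$ force the conditional probabilities of the $A_{j_\ell}^c$ to stay strictly positive, so the chain rule expansion is legitimate. Once this bookkeeping is in place, the inductive step closes and the lemma follows.
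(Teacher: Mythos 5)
Your proposal is correct and follows essentially the same route as the paper's proof: the identical auxiliary claim $P(A_i \mid \cap_{j\in S}A_j^c)\le x_i$ proved by induction on $|S|$, the same split of $S$ into $S_1 = S\cap J_i$ and $S_2 = S\setminus J_i$, the same numerator bound via the negative dependency condition and \eqref{LLL1}, and the same telescoping chain-rule lower bound for the denominator. The only cosmetic differences are that you cancel $\prod_{j\in J_i}(1-x_j)$ where the paper cancels $\prod_{j\in S_1}(1-x_j)$ (equivalent, since $S_1\subseteq J_i$ and each factor lies in $(0,1]$), and your positivity bookkeeping in the denominator can be simplified to the paper's one-line observation that $P(\cap_{j\in S}A_j^c)>0$ forces $P(\cap_{j\in S'}A_j^c)>0$ for every $S'\subseteq S$ by monotonicity.
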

The following immediate corollary will be useful in our applications.
\begin{corollary} Suppose that $A_1, \ldots, A_n$ are events and let $G$ be a negative dependency graph for these events. Suppose that the degree of any vertex in $G$ is at most $d$. If $p \in [0,1)$ is such that $P(A_i) \leq p$ for all $i \in [n]$ and $ep(d+1) \leq 1$, then $P(\cap_{i=1}^{n}{A_i}^c) > 0$.
\end{corollary}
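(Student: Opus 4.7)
The plan is to apply the Lov\'asz Local Lemma with the uniform choice $x_i = \tfrac{1}{d+1}$ for every $i \in [n]$. This choice is motivated by the fact that the product on the right of \eqref{LLL1} involves only $|J_i| \leq d$ factors of $1-x_j$, so a balanced value of $x$ is natural. The choice $x_i = 1/(d+1) \in [0,1)$ is legitimate as long as $d \geq 1$; the degenerate case $d=0$ can be dispatched separately by taking $x_i = p$, noting that the hypothesis $ep \leq 1$ forces $p < 1$ so that $x_i \in [0,1)$ and \eqref{LLL1} becomes $P(A_i) \leq p = x_i$ (the product over the empty set is $1$).

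Assuming $d \geq 1$, the first step is to verify hypothesis \eqref{LLL1}. With $x_i = 1/(d+1)$, we compute
\[
x_i \prod_{j \in J_i}(1-x_j) \;=\; \frac{1}{d+1}\left(1 - \frac{1}{d+1}\right)^{|J_i|} \;\geq\; \frac{1}{d+1}\left(\frac{d}{d+1}\right)^{d},
\]
where the inequality uses $|J_i| \leq d$ together with $1 - 1/(d+1) \in (0,1)$.

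The crux of the argument is the standard calibration inequality $\bigl(\tfrac{d}{d+1}\bigr)^d \geq \tfrac{1}{e}$. I would justify this by observing that $(1+1/d)^d$ is monotone increasing with limit $e$, so its reciprocal $\bigl(\tfrac{d}{d+1}\bigr)^d = 1/(1+1/d)^d$ is bounded below by $1/e$. Combining this with the hypothesis $ep(d+1) \leq 1$ yields
\[
x_i \prod_{j \in J_i}(1-x_j) \;\geq\; \frac{1}{e(d+1)} \;\geq\; p \;\geq\; P(A_i),
\]
which is exactly \eqref{LLL1}. The Lov\'asz Local Lemma then delivers
\[
P\!\left(\bigcap_{i=1}^{n} A_i^c\right) \;\geq\; \prod_{i=1}^{n}(1-x_i) \;=\; \left(\frac{d}{d+1}\right)^n \;>\; 0,
\]
completing the proof. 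The only genuine ingredient beyond a direct substitution into the lemma is the inequality $(d/(d+1))^d \geq 1/e$, so there is no real obstacle; the work is in recognizing that the uniform weighting $x_i = 1/(d+1)$ is what makes the bound in \eqref{LLL1} match the hypothesis $ep(d+1) \leq 1$.
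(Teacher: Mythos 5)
Your proof is correct and takes essentially the same route as the paper: the uniform choice $x_i = \frac{1}{d+1}$, the calibration $\left(\frac{d}{d+1}\right)^d \geq \frac{1}{e}$ (the paper justifies it via the decreasing function $x \mapsto \left(1 - \frac{1}{x+1}\right)^x$, you via the increasing sequence $\left(1+\frac{1}{d}\right)^d$, which is equivalent), and then a direct appeal to Lemma 2.1. Your separate handling of $d = 0$ is a small refinement the paper silently omits, since there the choice $x_i = \frac{1}{d+1} = 1$ falls outside the required range $[0,1)$.
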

\begin{proof}
Let $x_i = \frac{1}{d+1}$ for each $i \in [n]$. The inequality $ep(d+1) \leq 1$ implies that for any $i$, 
\vspace{-2mm}
\begin{equation*}
P(A_i) \leq p \leq \frac{1}{e(d+1)} \leq x_i \cdot \frac{1}{e} \leq x_i \cdot \left(1 - \frac{1}{d+1}\right)^d \leq x_i \cdot \left(1 - \frac{1}{d+1}\right)^{|J_i|},
\end{equation*}
since the function $x \mapsto \left(1 - \frac{1}{x+1}\right)^x$ is decreasing on $(0, \infty)$ and converges to $\frac{1}{e}$ as $x \rightarrow \infty$. Hence Lemma 2.1 applies.
\end{proof}
Before giving a proof of the lemma, suppose that $G$ is a graph on $[n]$ such that for any $i \in [n]$, the event $A_i$ is mutually independent of all ${A_j}$ such that $\{i,j\} \notin E(G)$. Such a graph is called a \textbf{dependency graph} for the events $A_1, \ldots, A_n$. A dependency graph clearly also satisfies the conditions of being a negative dependency graph. Many applications of Lemma 2.1 and Lemma 2.2 deal with the setting of a dependency graph. 
\begin{proof}[Proof of Lemma 2.1]
For each $k \in [n]$, let $B_k = \cap_{i=1}^{k}{A_i}^c$. We will show by induction on $k$ that
\begin{equation}\label{hypothesis}
P(B_k) \geq \prod_{i = 1}^{k} (1-x_i) > 0. 
\end{equation}
Note that
\begin{align*}
\vspace{-2mm}
P(B_1) = P({A_1}^c) = 1 - P(A_1) \geq 1 - x_1 \prod_{j \in J_i}(1 - x_j) \geq 1 - x_1 > 0 .
\end{align*}
So the result is true for $k = 1$.
Suppose it is true for each $k \in \{1, \ldots, m-1\}$, where $m \in \{2, \ldots, n\}$.
Then, we have $P(B_{m-1}) \neq 0$, so that 
\begin{align*}
P(B_m) &= P({A_m}^c \cap (\cap_{i = 1}^{m-1}{A_i}^c)) \\
&= P(\cap_{i = 1}^{m-1}{A_i}^c) - P({A_m} \cap (\cap_{i = 1}^{m-1}{A_i}^c)) \\
&= P(B_{m-1}) - P(A_m \cap B_{m-1}) \\
&= P(B_{m-1})\cdot\left[1 - \frac{P(A_m \cap B_{m-1})}{P(B_{m-1})}\right]\\
&\geq \left[\prod_{i=1}^{m-1}(1-x_i)\right]\cdot\left[1 - P(A_m | B_{m-1})\right]. 
\end{align*}
Thus to complete the proof, it suffices to show that $1 - P(A_m | B_{m-1}) \geq 1 - x_m$, or equivalently, that $P(A_m | B_{m-1}) \leq x_m$.
To show this, we will prove the following stronger claim:

\begin{claim}
Let $S$ be a proper subset of $[n]$ such that $P\left(\cap_{j \in S}{A_j}^c\right) > 0$. Then, for any $i \notin S$,
\vspace{-3mm} 
\begin{equation}\label{second induct}
\vspace{-7mm}
P\left(A_i | \cap_{j \in S}{A_j}^c\right) \leq x_i.
\end{equation}
\end{claim}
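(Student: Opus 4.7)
The plan is to prove the Claim by induction on $|S|$, which will in particular give the desired bound $P(A_m \mid B_{m-1}) \leq x_m$ by taking $S = [m-1]$ and $i = m$.

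For the base case $S = \emptyset$, the conditional probability reduces to $P(A_i)$, and hypothesis \eqref{LLL1} together with the fact that each $(1-x_j) \in (0,1]$ immediately gives $P(A_i) \leq x_i \prod_{j \in J_i}(1-x_j) \leq x_i$. For the inductive step with $|S| \geq 1$, the key idea is to separate the indices in $S$ according to whether they are neighbors of $i$ in $G$. Accordingly, I would set $S_1 := S \cap J_i$ and $S_2 := S \setminus J_i \subseteq J_i^c$, and then rewrite
\begin{equation*}
P\!\left(A_i \,\Big|\, \bigcap_{j \in S} A_j^c\right)
= \frac{P\!\left(A_i \cap \bigcap_{j \in S_1} A_j^c \,\Big|\, \bigcap_{j \in S_2} A_j^c\right)}{P\!\left(\bigcap_{j \in S_1} A_j^c \,\Big|\, \bigcap_{j \in S_2} A_j^c\right)},
\end{equation*}
where all conditional probabilities are well-defined since the denominator in each step is bounded below by $P(\bigcap_{j \in S} A_j^c) > 0$.

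To bound the numerator, I would drop the $A_j^c$ factors for $j \in S_1$ to get an upper bound of $P(A_i \mid \bigcap_{j \in S_2} A_j^c)$, which is at most $P(A_i) \leq x_i \prod_{j \in J_i}(1-x_j)$ by the negative dependency graph condition \eqref{first cond} (applied to $S_2 \subseteq J_i^c$), together with \eqref{LLL1}. If $S_1 = \emptyset$ this directly gives the result, so assume $S_1 = \{j_1, \ldots, j_r\}$ with $r \geq 1$. To bound the denominator, I would expand it as a telescoping product
\begin{equation*}
P\!\left(\bigcap_{k=1}^{r} A_{j_k}^c \,\Big|\, \bigcap_{j \in S_2} A_j^c\right)
= \prod_{k=1}^{r}\left[1 - P\!\left(A_{j_k} \,\Big|\, \bigcap_{l<k} A_{j_l}^c \cap \bigcap_{j \in S_2} A_j^c\right)\right].
\end{equation*}
Each conditioning set here is indexed by a subset of $[n] \setminus \{j_k\}$ of size $(k-1) + |S_2| \leq |S| - 1$, so the inductive hypothesis applies and each factor is at least $1 - x_{j_k}$, giving a lower bound of $\prod_{j \in S_1}(1-x_j)$ for the denominator.

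Combining the two bounds yields
\begin{equation*}
P\!\left(A_i \,\Big|\, \bigcap_{j \in S} A_j^c\right)
\leq \frac{x_i \prod_{j \in J_i}(1-x_j)}{\prod_{j \in S_1}(1-x_j)} \leq x_i,
\end{equation*}
where the last inequality uses $S_1 \subseteq J_i$ and the fact that the omitted factors $(1-x_j)$ lie in $(0,1]$. The main obstacle is really just careful bookkeeping: one must verify that each conditioning event has positive probability so that the telescoping decomposition is valid, and one must check that the indices appearing in the inductive application indeed form a proper subset of $[n]$ not containing $j_k$, so that the hypothesis can legitimately be invoked.
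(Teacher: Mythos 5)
Your proposal is correct and follows essentially the same route as the paper: induction on $|S|$, the split $S_1 = S \cap J_i$, $S_2 = S \setminus J_i$, the ratio rewriting conditioned on $\cap_{j \in S_2} A_j^c$, the negative dependency condition plus \eqref{LLL1} for the numerator, and the telescoping product with the inductive hypothesis for the denominator. The only cosmetic difference is that you keep the full product $\prod_{j \in J_i}(1-x_j)$ until the final step, while the paper drops the factors outside $S_1$ immediately; the argument is otherwise identical.
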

\begin{claimproof}
We prove this by induction on $|S|$. If $|S| = 0$, then $P(A_i|\cap_{j \in S}{A_j}^c) = P(A_i | \Omega) = P(A_i) \leq x_i \cdot \prod_{j \in J_i}(1 - x_j) \leq x_i$. Hence the hypothesis is true when $|S| = 0$. Suppose that it is true whenever $|S| \leq s - 1$, where $s \in \{1, \ldots, n-1\}$. Let $S \subseteq [n]$ be a set with $|S| = s$ and $P(\cap_{j \in S}{A_j}^c) > 0$. Let $i \notin S$ and define
\vspace{-2mm}
\begin{equation*}
\vspace{-2mm}
S_1 = \{j \in S: \{i,j\} \in E(G)\} \text{ and } S_2 = \{j \in S: \{i,j\} \notin E(G)\}.
\end{equation*}
If $S_1 = \emptyset$, then $P\left(A_i | \cap_{j \in S}{A_j}^c\right) \leq P(A_i) \leq x_i$, by definition of negative dependency graphs. So suppose $|S_1| \geq 1, \text{ say } S_1 = \{j_1, \ldots, j_r\}$, where $1 \leq r \leq s$. Note that $P(\cap_{j \in S}{A_j}^c) > 0 \text{ implies } P(\cap_{j \in S'}{A_j}^c) > 0 \text{ for any } S' \subseteq S$. Hence we have
\begin{align*}
P(A_i | \cap_{j \in S}{A_j}^c) &= \frac{\left(\frac{P(A_i \cap (\cap_{j \in S_1}{A_j}^c) \cap (\cap_{j \in S_2}{A_j}^c))}{P(\cap_{j \in S_2}{A_j}^c)}\right)}{\left(\frac{P((\cap_{j \in S_1}{A_j}^c) \cap (\cap_{j \in S_2}{A_j}^c))}{P(\cap_{j \in S_2}{A_j}^c)}\right)} \\
&\leq \frac{P(A_i | \cap_{j \in S_2} {A_j}^c)}{P(\cap_{j \in S_1} {A_j}^c | \cap_{j \in S_2} {A_j}^c)} \\
&\leq \frac{x_i \prod_{j \in S_1}(1 - x_j)}{P(\cap_{j \in S_1} {A_j}^c | \cap_{j \in S_2} {A_j}^c)},
\end{align*}
where we have used \eqref{first cond} and \eqref{LLL1} on the last line. 

Hence it suffices to show that $\prod_{j \in S_1}(1 - x_j) \leq P(\cap_{j \in S_1} {A_j}^c | \cap_{j \in S_2} {A_j}^c)$.
Assuming that $[0] = \emptyset$, a telescopic product argument gives
\begin{align*}
P(\cap_{j \in S_1} {A_j}^c | \cap_{j \in S_2} {A_j}^c) &= \frac{P((\cap_{k \in [r]} {A_{j_k}}^c) \cap (\cap_{j \in S_2} {A_j}^c))}{P(\cap_{j \in S_2} {A_j}^c)} \\
&= \prod_{t=1}^{r} \frac{P((\cap_{k \in [t]} {A_{j_k}}^c) \cap (\cap_{j \in S_2} {A_j}^c))}{P((\cap_{k \in [t-1]} {A_{j_k}}^c) \cap (\cap_{j \in S_2} {A_j}^c))}\\
&= \prod_{t=1}^{r} P({A_{j_t}}^c | \cap_{j \in S_2 \cup \{j_1, \ldots, j_{t-1}\}}{A_j}^c) \\
&= \prod_{t=1}^{r}\left( 1 - P({A_{j_t}} | \cap_{j \in S_2 \cup \{j_1, \ldots, j_{t-1}\}}{A_j}^c) \right) \\
&\geq \prod_{t=1}^{r}(1 - x_{j_t}) = \prod_{j \in S_1}(1 - x_j),
\end{align*}
completing the proof. \qedhere
\end{claimproof}
\end{proof}

\section{Construction of a negative dependency graph for random injections}
In this section, we will construct a negative dependency graph for a collection of events in the space of random injections from one set to another. This construction will then be used to prove results on hypergraph packing and Latin transversals using the Lov\'asz Local Lemma. The construction is from \cite{Lu/Szekely}.

Let $U$ and $V$ be two finite sets with $|U| \leq |V|$. Let $\Omega = I(U,V)$ denote the set of all injective maps from $U$ to $V$, equipped with a uniform probability measure. We define a \text{matching} to be a triple $(S, T, f)$ satisfying:
\begin{enumerate}
\item[(i)] $S, T \text{ are subsets of } U, V \text{ respectively, and}$
\item[(ii)] $f: S \rightarrow T \text{ is a bijection.}$
\end{enumerate}

We denote the set of all such matchings by $M(U,V)$. By a canonical identification, we have $\Omega = I(U,V) \subseteq M(U,V)$. For any permutation $\rho: V \rightarrow V$ of $V$, define the map $\pi_\rho : M(U,V) \rightarrow M(U,V)$ by $(\pi_\rho(g))(u) = \rho(g(u)) \text{ for all } u \in \text{Domain}(g)$. Hence $\pi_\rho(g) = \rho \circ g$. This map induces a natural map on the set of matchings by defining $\pi_\rho((S, T, g)) = (S, \rho(T), g')$, where $g'(u) = \rho(g(u)) \text{ for all } u \in S$. It is easy to see that $\pi_\rho: M(U,V) \rightarrow M(U,V)$ is an injection for each permutation $\rho$ of $V$.

Two matchings $(S_1, T_1, f_1)$ and $(S_2, T_2, f_2)$ are said to \textbf{conflict} each other if either $f_1(k) \neq f_2(k) \text{ for some } k \in S_1 \cap S_2$ or if $f_1^{-1}(k) \neq f_2^{-1}(k)
$ for some $k \in T_1 \cap T_2$. Thus, two non-conflicting matchings can be naturally combined to get a ``bigger'' matching. 

For a given matching $(S, T, f)$, we define the event 
\begin{equation*}
A_{S,T,f} = \{\sigma \in \Omega: \sigma(i) = f(i) ~\forall ~ i \in S\}.
\end{equation*} 
Thus, $A_{S,T,f}$ contains injections from $U$ to $V$ that extend the map $f$. An event is said to be \textbf{canonical} if $A = A_{S,T,f}$ for some matching $(S,T,f)$. It is not very difficult to see that such a canonical representation for an event is unique, except when $|U| = |V|$ and $|A| = 1$. Due to this property of having unique non-trivial representations, the notion of conflicting matchings can be extended to canonical events in a well-defined way. We say that two canonical events conflict each other if their associated matchings conflict. We now make some immediate observations.

\begin{remark}
Two canonical events conflict if and only if they are disjoint.
\end{remark}
\begin{remark}
If $\rho : V \rightarrow V$ is any permutation, and $(S,T,f)$ is a matching, then $\pi_\rho(A_{S,T,f}) = A_{S,\rho(T), f'}$, where $f': S \rightarrow \rho(T)$ is defined by $f'(u) = \rho(f(u))$, as before.
\end{remark}
We now state the result constructing a negative dependency graph for a collection of canonical events. The reader can refer to \cite{Lu/Szekely} for the technical proof.

\begin{theorem}
Let $A_1, \ldots, A_n$ be a collection of canonical events in $\Omega = I(U,V)$. Let $G$ be the graph on $[n]$ with the edge-set
\begin{equation*}
E(G) = \{\{i,j\}: A_i \text{ and } A_j \text{ conflict}\}.   
\end{equation*}
Then $G$ is a negative dependency graph for the events $A_1, \ldots A_n$.
\end{theorem}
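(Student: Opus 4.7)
\emph{The strategy.} I would fix a canonical event $A_i = A_{S_i, T_i, f_i}$ together with a set $S$ indexing canonical events $A_j = A_{S_j, T_j, f_j}$ that do not conflict with $A_i$, and induct on $|S|$ to establish $P(A_i \mid \cap_{j \in S} A_j^c) \leq P(A_i)$. Because $\Omega$ carries the uniform measure, every probability is a ratio of cardinalities, and the driving symmetry is the action $\pi_\rho$ of Remark 3.2, which for any permutation $\rho$ of $V$ restricts to a measure-preserving bijection of $\Omega$.

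\emph{Base case.} For $|S| = 1$, writing $S = \{j\}$, the claim is equivalent to $P(A_i \cap A_j) \geq P(A_i) P(A_j)$. The non-conflict condition ensures that $f_i$ and $f_j$ agree on $S_i \cap S_j$ and that they combine into a well-defined bijection $f_i \cup f_j \colon S_i \cup S_j \to T_i \cup T_j$; by Remark 3.1 this yields $A_i \cap A_j = A_{S_i \cup S_j, T_i \cup T_j, f_i \cup f_j}$, again a canonical event. Each canonical event $A_{S,T,f}$ has cardinality $(|V|-|S|)!/(|V|-|U|)!$ and $|\Omega| = |V|!/(|V|-|U|)!$, so after using $|S_i \cup S_j| = |S_i| + |S_j| - |S_i \cap S_j|$ the inequality reduces to a short comparison of falling factorials.

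\emph{Inductive step and obstacle.} For $|S| \geq 2$, I would pick $j_0 \in S$, partition $A_{j_0}^c \cap \cap_{j \in S \setminus \{j_0\}} A_j^c$ according to the value of $\sigma|_{S_{j_0}}$, and invoke a symmetrization argument: average over those permutations $\rho$ of $V$ that fix $T_i$ pointwise and act freely on $V \setminus T_i$. Each such $\pi_\rho$ preserves both $\Omega$ and $A_i$ setwise, so it leaves the target inequality invariant while transporting the other $A_j$'s into canonical events of controlled structure, permitting a reduction to the inductive hypothesis on a smaller index set. The main obstacle is that although no $A_j$ conflicts with $A_i$, distinct $A_j$'s may conflict freely with each other, so $\cap_{j \in S} A_j$ need not be canonical and its cardinality resists direct computation; the technical core of the argument will be to verify that the symmetrization step does reduce a general such intersection to a form compatible with the induction.
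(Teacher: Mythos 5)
A preliminary remark: the paper itself does not prove this theorem --- it explicitly defers ``the technical proof'' to Lu and Sz\'ekely --- so your proposal has to be judged against that cited argument. Your base case is sound: non-conflict forces $f_i \cup f_j$ to be a well-defined injection (a collision $f_i(u) = f_j(v)$ with $u \neq v$ would put $f_i(u) \in T_i \cap T_j$ with disagreeing inverses, i.e., a conflict), so $A_i \cap A_j = A_{S_i \cup S_j,\, T_i \cup T_j,\, f_i \cup f_j}$ is canonical, and with $|A_{S,T,f}| = (|V|-|S|)!/(|V|-|U|)!$ the inequality $P(A_i \cap A_j) \geq P(A_i)P(A_j)$ reduces, as you say, to $|V|!\,(|V|-a-b+c)! \geq (|V|-a)!\,(|V|-b)!$, which holds since $(|V|)_a \geq (|V|-b)_a$ for falling factorials.

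The genuine gap is the inductive step, and it is not a repairable detail: both the symmetry group and the induction skeleton are wrong. Permutations $\rho$ fixing $T_i$ pointwise do stabilize $\Omega$ and $A_i$, but $\pi_\rho$ carries each $A_j$ to $A_{S_j,\,\rho(T_j),\,\rho \circ f_j}$, an event generally outside your collection, and it does not preserve $B = \cap_{j \in S} A_j^c$; so averaging over such $\rho$ produces no identity relating $P(A_i \cap B)$ to $P(A_i)P(B)$, and you give no mechanism by which the index set actually shrinks --- the obstacle you flag (the $A_j$ may conflict arbitrarily among themselves, so $\cap_{j\in S}A_j^c$ has no canonical structure) is exactly where the proposal stops, and pairwise positive correlation does not bootstrap to the full inequality in the absence of an FKG-type structure. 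The cited proof is not inductive and dissolves this obstacle differently. One shows the equivalent condition $P(B \mid A_i) \leq P(B)$ by partitioning $\Omega$ into the equinumerous fibers $C_g = \{\sigma : \sigma|_{S_i} = g\}$ over injections $g : S_i \to V$, noting $A_i = C_{f_i}$, and proving $|B \cap C_{f_i}| \leq |B \cap C_g|$ for \emph{every} $g$ via an explicit injection: given $\sigma \in B \cap C_{f_i}$, redefine $\sigma$ on $S_i$ to equal $g$ and repair injectivity by chasing the displaced values through a chain of swaps inside $T_i \cup g(S_i)$ (the swaps must follow chains, since $g(S_i)$ may meet $T_i$). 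The verification that the modified injection stays in $B$ uses \emph{only} non-conflict of each individual $A_j$ with $A_i$: if it landed in $A_j$, agreement of $f_j$ with $f_i$ on $S_i \cap S_j$ and of the inverses on $T_i \cap T_j$ would force the original $\sigma$ into $A_j$, a contradiction. Mutual conflicts among the $A_j$ never enter, because the argument is a single injection on all of $B \cap C_{f_i}$ rather than an event-by-event reduction; averaging the fiber inequality over $g$ then gives the conclusion for all $S$ simultaneously, with no induction on $|S|$. Note the contrast with your symmetrization: the useful symmetries are those moving the fiber over $f_i$ to the fiber over $g$ (permutations with $\rho \circ f_i = g$, corrected pointwise to remain in $B$), whereas your stabilizer of $T_i$ fixes the one fiber that needs to move.
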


\section{Applications to packing of hypergraphs}
The negative dependency graph constructed in the previous section can be used in many situations where the events that need to be avoided can be identified with canonical events in a space of injections. In this section, we will demonstrate this with some results on hypergraphs, and we will continue this theme in the next section that covers a result on existence of Latin transversals. The results in this section are from \cite{Lu/Szekely}.

Recall that a hypergraph $H$ is a pair $(V(H), E(H))$ where $V(H)$ is a collection of points that we call the \textit{vertices} of $H$, and $E(H)$ is a collection of subsets of $V(H)$ called \textit{edges} of $H$. A hypergraph is called $\mathbf{r}$\textbf{-uniform} if all of its edges have cardinality equal to $r$. The \textbf{degree} of a vertex is defined as the number of edges that contain that vertex. Thus the \textbf{complete} $r$\textbf{-uniform hypergraph}, denoted $K_n^{(r)}$, is the hypergraph on a set of $n$ vertices whose edges are precisely all $r$-subsets of the vertex-set. A hypergraph $G$ is called a \textbf{subhypergraph} of a hypergraph $H$ if $V(G)$ is a subset of $V(H)$, and $E(G)$ is a subset of the collection of all edges of $H$ that contain elements from $V(G)$. 

We say that a collection of $r$-uniform hypergraphs $H_1, \ldots, H_k$ can be \textbf{packed into} an $r$-uniform hypergraph $H$, if there exist injections of $V(H_1) \ldots, V(H_k)$ into $V(H)$ such that the natural images of edge-sets are disjoint. Two hypergraphs $H_1$ and $H_2$ are called \textbf{isomorphic} if there is a bijection from $V(H_1)$ to $V(H_2)$ such that the natural image of the edge-set of $H_1$ coincides with the edge-set of $H_2$. For two $r$-uniform hypergraphs $H$ and $G$, we say that $H$ has a \textbf{perfect} $\mathbf{G}$\textbf{-packing}, if there exist vertex-disjoint hypergraphs $G_1, \ldots, G_k$ of $H$, each isomorphic to $G$, such that the sets $V(G_1), \ldots, V(G_k)$ partition $V(H)$. An obvious necessary condition for $H$ to have a perfect $G$-packing is for $|V(G)|$ to divide $|V(H)|$. We will derive a sufficient condition after we prove the following theorem on packing into complete $r$-uniform hypergraphs.
\vspace{-1mm}
\begin{theorem}
For each $i \in \{1,2\}$ let $H_i$ be an $r$-uniform hypergraph with $m_i$ edges such that each edge in $H_i$ intersects at most $d_i$ other edges of $H_i$. If $(d_1 + 1)m_2 + (d_2 + 1)m_1 < \frac{1}{e}\binom{n}{r}$, then $H_1$ and $H_2$ can be packed into $K_n^{(r)}$.
\end{theorem}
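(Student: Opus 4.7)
The plan is to fix an arbitrary embedding of $H_1$ into $[n]$ (identifying $V(H_1)$ with a subset of $[n]$ and the edges of $H_1$ with the corresponding $r$-subsets of $[n]$) and then apply the Lov\'asz Local Lemma in the space $\Omega = I(V(H_2), [n])$ of injections from $V(H_2)$ to $[n]$. A uniformly random $\sigma \in \Omega$ yields a valid packing iff $\sigma(e_2) \neq e_1$ for every $e_1 \in E(H_1)$ and $e_2 \in E(H_2)$, so the goal is to show that this occurs with positive probability.

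For each triple $(e_2, e_1, f)$, where $e_2 \in E(H_2)$, $e_1 \in E(H_1)$, and $f : e_2 \to e_1$ is a bijection, let $A_{e_2, e_1, f} = \{\sigma \in \Omega : \sigma|_{e_2} = f\}$ be the associated canonical event. The offending event $\sigma(e_2) = e_1$ is the union of the $r!$ canonical events $A_{e_2, e_1, f}$ as $f$ ranges over the bijections $e_2 \to e_1$, so avoiding every bad pair $(e_1, e_2)$ amounts to avoiding all $m_1 m_2 r!$ canonical events. A direct count gives $P(A_{e_2, e_1, f}) = \frac{(n-r)!}{n!} = \frac{1}{r!\binom{n}{r}}$, which I will take as $p$.

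By Theorem 3.3, the conflict graph of these canonical events is a negative dependency graph. To bound its maximum degree, I will observe that $A_{e_2, e_1, f}$ and $A_{e_2', e_1', f'}$ can conflict only if $e_2 \cap e_2' \neq \emptyset$ or $e_1 \cap e_1' \neq \emptyset$; otherwise the two matchings share neither a domain element nor a codomain element. Since each edge of $H_i$ meets at most $d_i$ others, there are at most $d_2 + 1$ edges $e_2'$ intersecting a fixed $e_2$ (including $e_2$ itself), and for each such $e_2'$ there are $m_1 r!$ choices of $(e_1', f')$; symmetrically for $e_1$. Subtracting one for the event $A_{e_2, e_1, f}$ itself, the degree satisfies
\[
d \leq (d_2 + 1) m_1 r! + (d_1 + 1) m_2 r! - 1.
\]

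Plugging into Corollary 2.2 yields
\[
ep(d+1) \leq \frac{e\bigl((d_1+1) m_2 + (d_2+1) m_1\bigr)}{\binom{n}{r}} \leq 1
\]
by the hypothesis, so with positive probability no $A_{e_2, e_1, f}$ occurs, producing an injection $\sigma$ that packs $H_2$ disjointly with the fixed embedding of $H_1$. The main obstacle is the degree-bound calculation: one must verify carefully that intersection of the $e_i$'s is truly necessary for the canonical events to conflict, and keep the overcount tight enough that the factor $r!$ in the degree cancels against the $\tfrac{1}{r!}$ in $p$, so that the resulting condition matches the hypothesis exactly.
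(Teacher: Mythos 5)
Your proof is correct and takes essentially the same route as the paper: the paper fixes $H_2$ inside $K_n^{(r)}$ and randomizes the injection of $V(H_1)$, whereas you fix $H_1$ and randomize $V(H_2)$, but since the hypothesis $(d_1+1)m_2+(d_2+1)m_1 < \frac{1}{e}\binom{n}{r}$ is symmetric in the two hypergraphs this is the same argument. Your value $p = \frac{1}{r!\binom{n}{r}}$, your conflict-based degree bound $d+1 \leq r!\left[(d_1+1)m_2+(d_2+1)m_1\right]$, and your appeal to the corollary of the Local Lemma all match the paper's proof step for step.
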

\begin{proof}
Without loss of generality, we assume that $H_2$ is given as a subhypergraph  of $K_n^{(r)}$. Let $U$ be the vertex-set of $H_1$, and let $V$ be that of $K_n^{(r)}$. We consider the probability space $\Omega = I(U,V)$ with uniform probability measure. We are following the notation of Section 3. Hence, $\Omega$ is just the set of all injections from $U$ to $V$, each injection having the same probability of occurring. It is clear that $H_1$ can be embedded into $K_n^{(r)}$ in a way that none of the edges of $H_2$ coincide with image of an edge of $H_1$ if none of the events $A_{F_1,F_2,\phi}$ (where $F_1, F_2$ are edges of $H_1, H_2$ respectively, and $\phi: F_1 \rightarrow F_2$ is a bijection) happen. Define $\mathcal{A} = \{A_{F_1,F_2, \phi} : F_1 \in E(H_1), F_2 \in E(H_2), \phi : F_1 \rightarrow F_2 \text{ is a bijection}\}$. We have the negative dependency graph $G$ for these events as described in the previous section.

Let $|U| = m$. Then, the number of injections from $U $ to $V$ is clearly equal to $\binom{n}{m} \cdot m!$. Now fix edges $F_1$, $F_2$ of $H_1, H_2$ respectively and a bijection $\phi: F_1 \rightarrow F_2$. The number of elements in the event $A_{F_1,F_2,\phi}$ is $\binom{n-r}{m-r} \cdot (m-r)!$. Indeed, the action of any map in this event is already determined on the $r$ elements of $F_1$, and the remaining $(m-r)$ elements of $U$ can be mapped injectively into the remaining $(n-r)$ elements of $V$ in so many ways. Hence,
\begin{align*}
P(A_{F_1, F_2, \phi}) = \frac{|A_{F_1, F_2, \phi}|}{|\Omega|} = \frac{\binom{n-r}{m-r} \cdot (m-r)!}{\binom{n}{m} \cdot m!} = \frac{1}{r! \binom{n}{r}} = p \text{ (say)}.
\end{align*}
Now $A_{F_1,F_2,\phi}$ conflicts with another canonical event $A_{F_1',F_2',\phi'}$ if and only if one of the following happens:
\begin{enumerate}
\item[(i)] $F_1 \cap F_1' = \emptyset$ and $F_2 \cap F_2' \neq \emptyset$.
\item[(ii)]$F_1 \cap F_1' \neq \emptyset$ and $\phi(x) = \phi'(x)$ for some $x \in F_1 \cap F_1'$.
\end{enumerate}
Let $c_1$ denote the number of canonical events $A_{F_1',F_2',\phi'}$ that conflict with $A_{F_1,F_2,\phi}$ due to (i), and let $c_2$ denote the corresponding number due to (ii). Simple counting arguments show that
\begin{align*}
c_1 \leq m_1 (d_2 + 1) r! - 1 \text{ and } c_2 \leq (d_1 + 1) m_2 r! - 1. 
\end{align*}
Hence, the degree $d$ of the negative dependency graph $G$ satisfies the inequality $d \leq c_1 + c_2 \leq r![(d_1 + 1)m_2 + (d_2 + 1)m_1] - 1$. Then,
\begin{align*}
ep(d+1) \leq e \frac{1}{r! \binom{n}{r}} r![(d_1 + 1)m_2 + (d_2 + 1)m_1] < 1 \text{ by hypothesis.}
\end{align*}
Hence, by Lemma 2.2, $P(\cap_{A \in \mathcal{A}}A) >0$, which implies that $\cap_{A \in \mathcal{A}}A \neq \emptyset$, as desired.
\end{proof}

Using this result, we can now prove the following sufficient condition for existence of a perfect $G$-packing. 

\begin{theorem}
Suppose $G$ and $H$ are two $r$-uniform hypergraphs that satisfy the following:
\begin{enumerate}
\item[(i)] $G$ has $s$ vertices, $H$ has $n$ vertices, and $s$ divides $n$.
\item[(ii)] $G$ has $m$ edges and each edge in $G$ intersects at most $d$ other edges of $G$. 
\item[(iii)] Each vertex of $H$ has degree greater than or equal to $(1-x)\binom{n-1}{r-1}$ for some number $x$.
\end{enumerate}
If $x < \frac{1}{e(d + 1 + r^2\frac{m}{s})}$, then $H$ has a perfect $G$-packing.
\end{theorem}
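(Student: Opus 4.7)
The plan is to reduce the theorem to a suitable application of Theorem 4.1 by packing a disjoint union of copies of $G$ into the complete hypergraph $K_n^{(r)}$ while avoiding the \emph{non-edges} of $H$.

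First I would set $k = n/s$ (an integer by hypothesis (i)) and let $H_1$ be the $r$-uniform hypergraph on $n$ vertices consisting of $k$ vertex-disjoint copies of $G$. Then $H_1$ has $m_1 = km = nm/s$ edges, and because the copies are vertex-disjoint, an edge in $H_1$ meets only edges within its own copy; hence each edge of $H_1$ intersects at most $d_1 = d$ other edges. Next I would define $H_2$ to be the edge-complement of $H$ in $K_n^{(r)}$, that is, the hypergraph on $V(H)$ whose edges are the $r$-subsets of $V(H)$ that are \emph{not} edges of $H$. Condition (iii) says each vertex has $H$-degree at least $(1-x)\binom{n-1}{r-1}$, so each vertex has $H_2$-degree at most $x\binom{n-1}{r-1}$. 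Summing degrees over vertices and dividing by $r$ gives $m_2 \leq \frac{n}{r}\,x\binom{n-1}{r-1} = x\binom{n}{r}$, and counting edges of $H_2$ through the $r$ vertices of any fixed edge gives $d_2 + 1 \leq rx\binom{n-1}{r-1}$.

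The key calculation, using the identity $\binom{n-1}{r-1} = \frac{r}{n}\binom{n}{r}$, is then
\begin{equation*}
(d_1+1)m_2 + (d_2+1)m_1 \leq x(d+1)\binom{n}{r} + rx\binom{n-1}{r-1}\cdot\frac{nm}{s} = x\binom{n}{r}\left(d+1+\frac{r^2 m}{s}\right).
\end{equation*}
The hypothesis $x < \frac{1}{e(d+1+r^2 m/s)}$ is exactly what is needed to make this strictly less than $\frac{1}{e}\binom{n}{r}$, so Theorem 4.1 applies and $H_1$ and $H_2$ can be packed into $K_n^{(r)}$.

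Finally I would interpret this packing: it gives an injection $V(H_1) \to V(K_n^{(r)}) = V(H)$ (which must be a bijection since both sets have size $n$) under which no edge of $H_1$ coincides with an edge of $H_2$. Since the edges of $H_2$ are precisely the non-edges of $H$, every image of an edge of $H_1$ is an edge of $H$. Thus the $k$ vertex-disjoint copies of $G$ comprising $H_1$ map to $k$ vertex-disjoint subhypergraphs of $H$, each isomorphic to $G$, whose vertex sets partition $V(H)$ — precisely a perfect $G$-packing of $H$. The main obstacle is simply identifying the right auxiliary hypergraphs $H_1$ and $H_2$ so that Theorem 4.1's mixed condition on $(d_1+1)m_2 + (d_2+1)m_1$ collapses, via the degree hypothesis on $H$, to the single clean bound on $x$; after that the verification is a direct counting computation.
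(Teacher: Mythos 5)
Your proposal is correct and takes essentially the same route as the paper's own proof: the same auxiliary hypergraphs $H_1$ (a disjoint union of $n/s$ copies of $G$) and $H_2$ (the complement of $H$), the same bounds $m_1 = nm/s$, $d_1 = d$, $m_2 \leq x\binom{n}{r}$, $d_2 + 1 \leq rx\binom{n-1}{r-1}$, and the same application of Theorem 4.1. The only differences are cosmetic: you obtain $m_2$ directly by summing $H_2$-degrees rather than via a lower bound on $|E(H)|$, and you spell out the final translation from the packing into $K_n^{(r)}$ back to a perfect $G$-packing, which the paper leaves as an observation.
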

\begin{proof}
Let $H_1$ be the union of $\frac{n}{s}$ vertex-disjoint copies of $G$. Let $H_2$ be the complement $r$-uniform hypergraph of $H$; that is, $V(H_2) = V(H_1)$ and $E(H_2)$ consists of all $r$-subsets of $V(H)$ that are not edges of $H$. It is not very difficult to see that $H$ has a perfect $G$-packing if and only if $H_1$ and $H_2$ can be packed into $K_n^{(r)}$. Following the notation of the previous theorem, we have, 
\begin{align}\label{first data}
d_1 = d \text{ and } m_1 = |E(H_1)| = \frac{n}{s}|E(G)| = \frac{nm}{s}.
\end{align}
Now, condition (iii) implies that the degree of any vertex in $H$ is at least $(1-x)\binom{n-1}{r-1}$. Also, given an element of $V(H)$, there are $\binom{n-1}{r-1}$ $r$-subsets of $V(H)$ containing that vertex. Hence the degree of any vertex in $H_2$ is at most $\binom{n-1}{r-1} - (1-x)\binom{n-1}{r-1} = x \cdot \binom{n-1}{r-1}$. Hence, for an edge $F$ of $H_2$, the number of other edges that intersect $H$ is at most equal to $\sum_{v \in F}(d_{H_2}(v) - 1) = r[x \binom{n-1}{r-1} - 1] \leq rx \binom{n-1}{r-1} - 1$, where $d_{H_2}(v)$ denotes the degree of $v$ in $H_2$. Since this is valid for any edge of $H_2$, we have
\begin{equation}\label{second data}
d_2 \leq rx \binom{n-1}{r-1} - 1.
\end{equation}
Also, we have 
\vspace{-2mm}
\begin{align*}
|E(H)| &\geq \sum_{v \in V(H)} d_{H}(v) \geq n(1-x)\binom{n-1}{r-1} = r(1-x)\binom{n}{r}\\
&\geq (1-x)\binom{n}{r}.
\end{align*}
Hence, we get 
\vspace{-2mm}
\begin{align}\label{third data}
m_2 = |E(H_2)| = \binom{n}{r} - |E(H)| \leq \binom{n}{r} - (1-x)\binom{n}{r} = x \binom{n}{r}.
\end{align}
From, \eqref{first data}, \eqref{second data}, and \eqref{third data}, we have
\begin{align*}
(d_1 + 1)m_2 + (d_2 + 1)m_1 &\leq (d+1)x\binom{n}{r} + rx\binom{n-1}{r-1}\cdot\frac{nm}{s} \\
&\leq \binom{n}{r} \left[ d+1 + \frac{r^2m}{s}\right]x \\
&\leq  \frac{1}{e} \binom{n}{r}, \text{ by hypothesis.}
\end{align*}
Hence Theorem 4.1 applies, completing the proof.
\end{proof}

If $m=1$ (so that $d=0$ and $s=r$), then $G$ can be viewed as a single edge of $H$, and so a perfect $G$-packing of $H$ amounts to a partition of vertices of $H$ by edges of $H$. If such a partition exists, we say that $H$ has a \textbf{perfect matching}. In the further special case when $r = 2$, we get perfect matchings of graphs. Hence we have the following results.
\begin{corollary}
Suppose that $r$ divides $n$. If the degree of each vertex in an $r$-uniform hypergraph $H$ on $n$ vertices is at least $(1 - \frac{1}{e(1+r)})\binom{n-1}{r-1}$, then $H$ has a perfect matching.
\end{corollary}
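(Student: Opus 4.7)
The plan is to invoke Theorem 4.2 in the degenerate case where the hypergraph $G$ being packed is a single edge. Concretely, take $G$ to be the $r$-uniform hypergraph on $r$ vertices with exactly one edge. As observed in the paragraph immediately preceding this corollary, a perfect $G$-packing of $H$ with this choice of $G$ is precisely a partition of $V(H)$ into edges of $H$, i.e., a perfect matching of $H$. So it suffices to verify the hypotheses of Theorem 4.2 for this $G$.

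With this choice, the parameters appearing in Theorem 4.2 are $s = |V(G)| = r$, $m = |E(G)| = 1$, and $d = 0$, since the unique edge of $G$ meets no other edge. Hypothesis (i) of Theorem 4.2 then reduces to the assumption that $r$ divides $n$, which is given; hypothesis (ii) holds trivially. For hypothesis (iii) one needs to exhibit an $x$ such that every vertex of $H$ has degree at least $(1-x)\binom{n-1}{r-1}$, together with the threshold bound on $x$ from the theorem.

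The key calculation is to simplify the threshold $\frac{1}{e(d+1+r^2 m/s)}$: substituting $d=0$, $m=1$, $s=r$, the denominator becomes $e(1 + r^2/r) = e(r+1)$, so Theorem 4.2 applies whenever $x < \frac{1}{e(r+1)}$. The hypothesis of the corollary matches this threshold precisely via the natural choice $x = \frac{1}{e(r+1)}$, and condition (iii) is then exactly the assumed lower bound on the minimum degree.

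The only real subtlety, and the step I would handle most carefully, is the gap between the weak inequality ``at least'' in the degree hypothesis and the strict inequality $x < \frac{1}{e(r+1)}$ required by Theorem 4.2. Since vertex degrees are integers while $\bigl(1 - \frac{1}{e(r+1)}\bigr)\binom{n-1}{r-1}$ is generically non-integral, one can replace $\frac{1}{e(r+1)}$ by a value slightly smaller and still satisfy hypothesis (iii); in the degenerate case where equality holds, a standard perturbation argument (or simply stating the result with strict inequality) handles the edge case. Once a valid $x$ is in hand, Theorem 4.2 produces the perfect $G$-packing of $H$, which as noted is a perfect matching. Apart from this cosmetic point, the argument is a direct substitution.
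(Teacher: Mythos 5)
Your proof is correct and is essentially the paper's own argument: the paper establishes the corollary precisely by the specialization $m=1$, $d=0$, $s=r$ of Theorem 4.2 described in the paragraph immediately preceding it, which is the substitution you carry out. The strict-inequality subtlety you flag is real and the paper silently glosses over it, but your fix is valid: since $e$ is irrational, $\bigl(1-\frac{1}{e(r+1)}\bigr)\binom{n-1}{r-1}$ is never an integer, so the integer degree bound holds strictly and a slightly smaller $x$ satisfies both hypothesis (iii) and the strict threshold.
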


\begin{corollary}
If the degree of any vertex in a graph $G$ is at least $\frac{(3e-1)(n-1)}{3e}$, then $G$ has a perfect matching.
\end{corollary}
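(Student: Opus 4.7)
The plan is to specialize Corollary 4.3 to the case $r = 2$, since a graph is precisely a $2$-uniform hypergraph and the notion of ``perfect matching'' of a graph agrees with the hypergraph definition. So the entire proof reduces to substituting $r=2$ into the hypothesis of Corollary 4.3 and checking that it matches the stated degree bound of Corollary 4.4.

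First I would compute the two relevant quantities at $r=2$. The binomial coefficient becomes $\binom{n-1}{r-1} = \binom{n-1}{1} = n-1$, and the fractional coefficient becomes $1 - \frac{1}{e(1+r)} = 1 - \frac{1}{3e} = \frac{3e-1}{3e}$. Multiplying these together, the lower bound on vertex degrees required by Corollary 4.3 reads exactly $\frac{(3e-1)(n-1)}{3e}$, which is the hypothesis of Corollary 4.4. Thus the existence of a perfect matching in $G$ is immediate from Corollary 4.3.

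There is no real obstacle: the statement is a direct numerical specialization. The only subtlety is the divisibility condition $r \mid n$ appearing in Corollary 4.3, which here becomes the requirement that $n$ be even. This is of course necessary for any perfect matching to exist, and should be regarded as implicit in the statement of Corollary 4.4; under this standing assumption the argument is simply ``apply Corollary 4.3 with $r=2$.''
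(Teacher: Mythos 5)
Your proposal is correct and matches the paper's own route exactly: Corollary 4.4 is obtained by specializing Corollary 4.3 to $r=2$, where $\binom{n-1}{r-1}=n-1$ and $1-\frac{1}{e(1+r)}=\frac{3e-1}{3e}$, so the degree bounds coincide. Your remark that the divisibility condition becomes ``$n$ even'' and must be read as implicit in the statement (since it is necessary for any perfect matching) is also the right way to handle the one subtlety.
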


\section{A result on Latin transversals}
Let $A = ((a_{i,j}))$ be an $(n \times n)$ matrix. A permutation $\pi$ of $[n]$ is called a \textbf{Latin transversal} of $A$ if the entries $a_{i, \pi(i)}$ for $i \in [n]$ are all distinct. The Lopsided Lov\'asz Local Lemma was initially introduced in \cite{Erdos/Spencer} to obtain a sufficient condition for existence of Latin transversals. We will use the new terminology built in Section 3 to get a slightly improved version of their result.

\begin{theorem}
Suppose $k \leq \frac{n-1}{4e}$, and suppose that no element appears in more than $k$ entries of $A$. Then $A$ has a Latin transversal. 
\vspace{-1mm}
\end{theorem}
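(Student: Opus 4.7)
The plan is to model a uniformly random permutation $\pi$ of $[n]$ as a uniform element of $\Omega = I([n], [n])$ and apply Corollary 2.2 to a family of canonical bad events whose joint avoidance is equivalent to $\pi$ being a Latin transversal. For each unordered pair of matrix cells $T = \{(i_1, j_1), (i_2, j_2)\}$ satisfying $i_1 \neq i_2$, $j_1 \neq j_2$, and $a_{i_1, j_1} = a_{i_2, j_2}$, I would set $A_T = \{\pi \in \Omega : \pi(i_1) = j_1 \text{ and } \pi(i_2) = j_2\}$. This event is canonical, corresponding to the matching $(\{i_1, i_2\}, \{j_1, j_2\}, f_T)$ with $f_T(i_\alpha) = j_\alpha$, and $\pi$ is a Latin transversal precisely when no $A_T$ occurs. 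Theorem 3.3 then supplies a negative dependency graph $G$ on the family of all such $A_T$, whose edges are exactly the pairs of matching-conflicting events. A direct count gives $P(A_T) = (n-2)!/n! = 1/[n(n-1)] =: p$, uniformly in $T$.

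The main work is bounding the degree $d$ of $G$. Fix $T = \{c_1, c_2\}$; if $T' = \{c_1', c_2'\}$ is a distinct neighbor of $T$, then matching-conflict forces some cell $c' \in T'$ to share a row or column with some $c_\alpha \in T$ while being distinct from $c_\alpha$. For each fixed $c_\alpha$ there are exactly $2(n-1)$ such witness cells $c'$ (the $n-1$ cells in row $i_\alpha$ other than $c_\alpha$, together with the $n-1$ cells in column $j_\alpha$ other than $c_\alpha$). For each such $c'$, the number of bad events containing $c'$ is at most $k-1$, since the value $a_{c'}$ appears in at most $k$ cells of $A$ and the companion cell of a bad event must carry this same value in a different row \emph{and} a different column. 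Summing over $\alpha$ and over witnesses yields the overcounting bound $d \leq 2 \cdot 2(n-1)(k-1) = 4(n-1)(k-1)$, so $d + 1 \leq 4(n-1)k$ for $n \geq 2$.

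Finally, using the hypothesis $k \leq (n-1)/(4e)$, the criterion of Corollary 2.2 is satisfied:
\[
ep(d+1) \;\leq\; \frac{e \cdot 4(n-1)k}{n(n-1)} \;=\; \frac{4ek}{n} \;\leq\; \frac{n-1}{n} \;<\; 1,
\]
so $P\bigl(\bigcap_T A_T^c\bigr) > 0$ and a Latin transversal exists. The delicate part will be justifying the degree bound: one must confirm that the abstract notion of matching-conflict from Section~3 reduces to the concrete row/column witness description above (so that no neighbor is missed), and that the $k-1$ bound on bad events through a fixed cell remains valid even when some of the other cells carrying the value $a_{c'}$ happen to share a row or column with $c'$ (in which case they simply fail to extend $c'$ to a bad event, only strengthening the bound).
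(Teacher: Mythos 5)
Your proposal is correct and follows essentially the same route as the paper: the same space $I([n],[n])$, the same canonical bad events, the negative dependency graph from Theorem 3.3, and the $ep(d+1)\leq 1$ criterion of Corollary 2.2. The only difference is bookkeeping in the degree bound, where your count $d \leq 4(n-1)(k-1)$ is in fact slightly tighter than the paper's $d \leq 4nk-1$, and your row/column-witness reduction of the conflict condition is a valid (indeed exact) translation of the matching-conflict definition.
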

\begin{proof}
Let $\Omega = I([n],[n])$ with uniform probability measure. Hence, $\Omega$ is just the set of all permutations of $[n]$, so that $|\Omega| = n!$. Let $\mathcal{B} = \{(\{i,i'\},\{j,j'\}): i < i', j \neq j', a_{i,j} = a_{i',j'}\}$. If $I = \{i, i'\}, J = \{j,j'\}$ are such that $(I,J) \in \mathcal{B}$ with $i < i'$ and $a_{i,j} = a_{i',j'}$, then define $\pi_{(I,J)} : I \rightarrow J$ by $\pi_{(I,J)}(i) = j$, and $\pi_{(I,J)}(i') = j'$. It is not too difficult to see that there is a Latin transversal for $A$ if and only if there is a permutation that is not an element of $A_{I,J,\pi_{(I,J)}}$ for any $(I,J) \in \mathcal{B}$. Hence it suffices to show that $P(\cap_{(I,J) \in \mathcal{B}}{A_{I,J, \pi_{I,J}}}^c) > 0$. 

Let $\mathcal{A} = \{A_{I,J,\pi_{(I,J)}} : (I,J) \in \mathcal{B}\}$. Let $G$ be the negative dependency graph for these canonical events, as constructed in Section 4. Let $d$ be the maximum degree of a vertex of $G$, and suppose that it is the degree of the vertex corresponding to the event $A_{I,J,\pi_{(I,J)}}$. Note that for $(I_1,J_1) \in \mathcal{B}\backslash\{(I,J)\}$, $A_{I,J,\pi_{(I,J)}}$ can possibly conflict with $A_{I',J',\pi_{(I',J')}}$ only if $I \cap I' \neq \emptyset$ or $J \cap J' \neq \emptyset$. For such a tuple $(I_1, J_1)$ with $I_1 = \{i_1, i_1'\}$ and $J_1 = \{j_1, j_1'\}$, there are $2n + 2n = 4n$ possible values of $(i_1, j_1)$ (since $i_1 \in I$ or $j_1 \in J$), and for each of these values of $(i_1, j_1)$, there are $k$ possible values of $(i_1', j_1')$, since we require that $a_{i_1, j_1} = a_{i_1', j_1'}$, and an entry is repeated at most $k$ times in the matrix $A$. Hence
\begin{align*}
\vspace{-2.5mm}
d &\leq |\{(I_1,J_1): I \cap I_1 \neq \emptyset \text{ or } J \cap J_1 \neq \emptyset\}\backslash\{(I,J)\}| \\
&= |\{(I_1,J_1): I \cap I_1 \neq \emptyset \text{ or } J \cap J_1 \neq \emptyset\}| - 1 \\
&\leq 4nk - 1.
\vspace{-3mm}
\end{align*}
It is clear that for any $(I,J) \in \mathcal{B}$, $|A_{I,J,\pi_{(I,J)}}| = (n-2)!$, as the action of a permutation in this set is already specified on the set $I$ of two elements. Hence, for any $(I,J) \in \mathcal{B},$
\begin{equation*}
P(A_{I,J,\pi_{(I,J)}}) = \frac{(n-2)!}{n!} = \frac{1}{n(n-1)} = p  ~\text{(say)}.
\vspace{-3mm}
\end{equation*}
Then, $ep(d+1) \leq \frac{e(4nk)}{n(n-1)} = e \cdot \frac{4k}{n-1} \leq 1$.
Hence, the conditions of Lemma 2.2 are satisfied, so that $P(\cap_{A \in \mathcal{A}}A^c) >0$, completing the proof.
\end{proof}
\section*{Acknowledgements}
The author is grateful to Stefan van Zwam for his supervision and guidance in this project as part of the course \textit{Communicating Mathematics II} at Louisiana State University.


\begin{thebibliography}{99}
\bibitem{Alon/Spencer} Alon, N. and Spencer J.H., {\em The Probabilistic Method,} third edition, Wiley, 2008.

\bibitem{Lu/Szekely} Lu, L. and Sz\'ekely, L., {\it Using Lov\'asz Local Lemma in the Space of Random Injections}, Electron. J. Combin. {\bf 14}, 2007.

\bibitem{Erdos/Lovasz} Erd\H{o}s, P., and Lov\'asz, L., {\it Problems and results on $3$-Chromatic Hypergraphs and Some Related Questions}, Infinite and finite sets (eds. A. Hajnal, R. Rado, and V.T. S\'os), {\bf II}, 1975, North-Holland.

\bibitem{Erdos/Spencer} Erd\H{o}s, P., and Spencer J.H., {\em  
Lopsided Lov\'asz Local Lemma and Latin Transversals}, Disc. Appl. Math., {\bf 30},  
1991.
\end{thebibliography}
\end{document}